\newtheorem{thm}{Theorem}[section]
\newtheorem{lem}[thm]{Lemma}
\newtheorem{thmA}{Theorem}
\newtheorem{corA}[thmA]{Corollary}
\theoremstyle{definition}
\theoremstyle{remark}
\numberwithin{equation}{section}
\newcommand{\NM}{\vartriangleleft}
\DeclareMathOperator{\Irr}{Irr}
\DeclareMathOperator{\Hall}{Hall}
\DeclareMathOperator{\B}{B}
\DeclareMathOperator{\I}{I}
\DeclareMathOperator{\Lin}{Lin}
\DeclareMathOperator{\IBr}{IBr}
\begin{document}

\title{Lifts of Brauer characters in characteristic two, II}

\author{Junwei Zhang}
\address{School of Mathematical Sciences, Shanxi University, Taipan, 030006, China.}
\email{zhangjunwei@sxu.edu.cn}
\email{changxuewu@sxu.edu.cn}
\email{jinping@sxu.edu.cn}
\email{wanglei0115@sxu.edu.cn}

\author{Xuewu Chang}

\author{Ping Jin*}

\author{Lei Wang}

\subjclass[2010]{Primary 20C20; Secondary 20C15}

\thanks{*Corresponding author}

\keywords{Brauer character; $\pi$-partial character; lift; Navarro vertex; Cossey's conjecture}

\date{}

\maketitle

\begin{abstract}
In 2007, J. P. Cossey conjectured that if $G$ is a finite $p$-solvable group and $\varphi$
is an irreducible Brauer character of $G$ with vertex $Q$,
then the number of lifts of $\varphi$ is at most $|Q:Q'|$.
In this paper we revisited Cossey's conjecture for $p=2$
from the perspective of Navarro vertices
and obtained a new way to count the number of lifts of $\varphi$.
Some applications were given.
\end{abstract}

\section{Introduction}
This paper is a continuation of \cite{JW2023}, and we will still use the notation there.
Let $p$ be a prime number. Recall that given a $p$-solvable group $G$ and $\varphi\in\IBr_p(G)$ with vertex $Q$,
Cossey conjectured in \cite{C2007} that $|L_\varphi|\le |Q:Q'|$, where $L_\varphi$ denotes the set of irreducible
complex characters of $G$ that are lifts of $\varphi$.
If $p$ is odd, Cossey and Lewis established the uniqueness of the \emph{generalized vertices}
for each lift of $\varphi$ and then confirmed Cossey's conjecture in many important situations;
see the references in \cite{JW2023}.
If $p$ is even, however, it seems that their argument does not work in general,
and in this case, the third and the fourth authors
introduced in \cite{JW2023} the so-called \emph{twisted vertices}
and \emph{$*$-vertices} for each irreducible complex character of $G$,
and using the uniqueness of such vertices
and the technique of $\pi$-induction developed by Isaacs in \cite{I1986},
they proved the following, which is Theorem B of that paper.

\begin{thm}\label{JW-B}
Let $G$ be a solvable group, and let $\varphi\in\IBr_2(G)$ with vertex $Q$.
If $\delta$ is a linear character of $Q$, then
$$|\tilde L_\varphi(Q,\delta)|\le |{\bf N}_G(Q):{\bf N}_G(Q,\delta)|$$
and in particular $|{\tilde L}_\varphi|\le |Q:Q'|$,
where $\tilde L_\varphi$ denotes the set of lifts of $\varphi$ that have a linear Navarro vertex
and $\tilde L_\varphi(Q,\delta)$ is the subset of those characters in $\tilde L_\varphi$ with twisted vertex $(Q,\delta)$.
\end{thm}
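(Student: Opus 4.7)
The plan is to combine three ingredients: the uniqueness up to $G$-conjugacy of the twisted vertex $(Q,\delta)$ attached to each lift in $\tilde L_\varphi$ with linear Navarro vertex, established earlier in \cite{JW2023}; Isaacs' $\pi$-partial character machinery with $\pi=\{2\}'$, in particular $\pi$-induction and the $\pi$-factorization of characters; and a Clifford-style reduction through ${\bf N}_G(Q)$.

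I would dispose of the ``in particular'' clause first, as a formal corollary of the main inequality. Since twisted vertices are unique up to $G$-conjugacy, $\tilde L_\varphi$ decomposes as the disjoint union of the subsets $\tilde L_\varphi(Q,\delta)$, with $Q$ a fixed vertex of $\varphi$ and $\delta$ ranging through a set of ${\bf N}_G(Q)$-orbit representatives in $\Lin(Q)$. Summing the main inequality over these representatives gives
\[
|\tilde L_\varphi| \;\le\; \sum_\delta |{\bf N}_G(Q):{\bf N}_G(Q,\delta)| \;=\; |\Lin(Q)| \;=\; |Q:Q'|.
\]

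For the main inequality I would induct on $|G|$ and split on whether $Q\NM G$. If $Q\not\NM G$, a Fong--Reynolds-type reduction in $\pi$-character theory should let me pass to ${\bf N}_G(Q)$: each $\chi\in\tilde L_\varphi(Q,\delta)$ is $\pi$-induced from a unique lift of an appropriate Brauer character of ${\bf N}_G(Q)$, the twisted vertex is preserved, and both sides of the inequality transport, so induction closes this case. The essential case is $Q\NM G$. Here one uses the $\pi$-$\pi'$ factorization of each lift, which by the uniqueness theorem pins down the $\pi'$-special factor up to a $G$-orbit of $(Q,\delta)$. Reading off the specific $\delta^g$ appearing in the factorization of $\chi$ defines a map $\tilde L_\varphi(Q,\delta)\to {\bf N}_G(Q)/{\bf N}_G(Q,\delta)$, and the inequality amounts to the injectivity of this map.

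Injectivity is where I expect the main obstacle to sit. Two distinct lifts sharing the label $\delta^g$ would have twisted vertex data coinciding not merely up to $G$-conjugacy but up to ${\bf N}_G(Q,\delta)$-conjugacy, and the task is to show that this forces them to coincide as characters. This is precisely where the full strength of the uniqueness results for twisted and $*$-vertices from \cite{JW2023} must be brought to bear, and where the argument diverges from the Cossey--Lewis treatment for odd $p$: the extra ambiguities in characteristic two are absorbed by refining from generalized vertices to twisted vertices, and matching $*$-vertex data should seal the injectivity. The remainder is careful bookkeeping with $\pi$-induction.
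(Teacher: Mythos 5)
Two preliminary points: the paper does not actually re-prove Theorem~\ref{JW-B} itself; it abandons the twisted/$*$-vertex formalism and proves the parallel Theorem~A, in which $\tilde L_\varphi(Q,\delta)$ is replaced by the Navarro-vertex set $L_\varphi(Q,\delta)$, deducing it from the general reduction Theorem~B. So the natural comparison is with that argument. Your handling of the ``in particular'' clause---summing the main inequality over ${\bf N}_G(Q)$-orbit representatives $\delta_i$ in $\Lin(Q)$ so that $\sum_i|{\bf N}_G(Q):{\bf N}_G(Q,\delta_i)|=|\Lin(Q)|=|Q:Q'|$---is exactly the device the paper uses in the proof of Theorem~A, and is fine.

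The core inequality is where your plan breaks. First, the reduction to ${\bf N}_G(Q)$ when $Q\not\NM G$ is not a Fong--Reynolds or Clifford-style reduction: ${\bf N}_G(Q)$ is not normal in $G$, so there is no inertia-group bijection to invoke. What you would need is a Green-correspondence--type bijection between lifts of $\varphi$ with Navarro/twisted vertex $(Q,\delta)$ and lifts of its Green correspondent in ${\bf N}_G(Q)$; establishing that ordinary lifts transfer across such a correspondence is precisely the hard open content here, not a known tool one may appeal to. Second, in the case $Q\NM G$, the proposed map $\tilde L_\varphi(Q,\delta)\to{\bf N}_G(Q)/{\bf N}_G(Q,\delta)$ ``reading off the specific $\delta^g$'' is not well-defined in a useful way: every $\chi\in\tilde L_\varphi(Q,\delta)$ already has twisted vertex exactly $(Q,\delta)$, so the label you would read off is always the trivial coset, and there is nothing to be injective.

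The paper sidesteps both obstacles. Its proof of Theorem~B inducts on $|G|$ but never passes to ${\bf N}_G(Q)$; instead Lemma~\ref{max-N} furnishes a canonical normal subgroup $N\NM G$, maximal subject to every $\chi\in L_\varphi(Q,\delta)$ having $\pi$-factored constituents on $N$, and the count is organized via Navarro's good constituents of $\chi_N$ (Lemma~\ref{N5.3}, following \cite{N2002a}). One partitions $L_\varphi(Q,\delta)$ over ${\bf N}_G(Q,\delta)$-orbit representatives $\eta$ of the relevant constituents of $\varphi_N$ (Step~1), matches this with the analogous partition of $|{\bf N}_G(Q):{\bf N}_G(Q,\delta)|$ (Step~2), and then descends by Clifford correspondence to the inertia group $T=G_{\hat\eta\hat\delta}$ (Step~3), closing the induction with the counting hypothesis $|I_\sigma(R|Q)|\le|{\bf N}_S(Q):{\bf N}_R(Q)|$, supplied for $2\notin\pi$ by Corollary~5.4 of \cite{JW2023}. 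That induction-counting bound---which controls how many $\pi$-partial characters of $R$ with vertex $Q$ induce a given $\sigma$---carries the real weight of the argument and is absent from your sketch. In short, your outline reproduces the routine orbit-sum for the ``in particular'' step, but the heart of the inequality as you describe it rests on a reduction that is unavailable and a labeling map that collapses; the paper's route is a normal-subgroup descent through good constituents plus an external counting bound, a genuinely different and, as the authors note, simpler mechanism than the twisted-vertex apparatus of \cite{JW2023}.
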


It should be pointed out that the proof of the above theorem is quite complicated and is far from being natural and concise.
Our motivation in this paper is to revisit Theorem \ref{JW-B} to simplify its statement and proof.
Using some of ideas and arguments developed by Navarro in \cite{N2002a},
we are now able to replace the twisted vertices and $*$-vertices introduced in \cite{JW2023}
with Navarro vertices and obtain a new way to count the number of lifts of a given Brauer character,
which seems more natural and straightforward.
As usual, we will work in the context of Isaacs' $\pi$-partial characters,
and we refer the reader to \cite{I2018} for definitions and results used in this paper.
Here we just mention that $\I_\pi(G)$ denotes the set of irreducible $\pi$-partial characters of a $\pi$-separable group $G$,
so that $\I_\pi(G)=\IBr_p(G)$ whenever $\pi=\{p\}'$.
Also, we will always write $L_\varphi(Q,\delta)$ for the set of those lifts of $\varphi\in\I_\pi(G)$ with Navarro vertex $(Q,\delta)$.
Note that in the case where $2\in\pi$,
all the generalized vertices for each lift of $\varphi$ are conjugate (see Theorem 4.1 of \cite{CL2012}),
so our notation $L_\varphi(Q,\delta)$ is compatible with
the same notation used by Cossey and Lewis in \cite{CL2010}.

The following is our main result (compare with Theorem 5.5 of \cite{JW2023}).

\begin{thmA}\label{A}
Let $G$ be a $\pi$-separable group with $2\notin\pi$,
and let $\varphi\in\I_\pi(G)$ have vertex $Q$. Then
$$|L_\varphi(Q,\delta)|\le |{\bf N}_G(Q):{\bf N}_G(Q,\delta)|$$
for each $\delta\in\Irr(Q)$. In particular, $|{\tilde L}_\varphi|\le |Q:Q'|$.
\end{thmA}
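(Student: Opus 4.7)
The plan is to prove the main inequality by induction on $|G|$ via reductions based on \cite{N2002a}, ultimately reducing to a uniqueness statement in the case where $Q$ is normal and $\delta$ is $G$-invariant; the ``in particular'' assertion then follows by an orbit-counting argument.

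First, I would derive the particular statement from the main inequality. Navarro's theory from \cite{N2002a} ensures that the Navarro vertex of each lift is unique up to $G$-conjugacy, so $L_\varphi(Q,\delta)$ depends only on the ${\bf N}_G(Q)$-orbit of $\delta$. Fixing a vertex $Q$ of $\varphi$, two pairs $(Q,\delta)$ and $(Q,\delta')$ are $G$-conjugate precisely when $\delta$ and $\delta'$ lie in a common ${\bf N}_G(Q)$-orbit (since conjugating the first component to itself requires the conjugator in ${\bf N}_G(Q)$); hence $\tilde L_\varphi=\bigsqcup_{[\delta]}L_\varphi(Q,\delta)$, the disjoint union being taken over ${\bf N}_G(Q)$-orbit representatives $[\delta]$ on $\Lin(Q)$. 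Summing the main inequality over these representatives yields
$$|\tilde L_\varphi|\le\sum_{[\delta]}|{\bf N}_G(Q):{\bf N}_G(Q,\delta)|=|\Lin(Q)|=|Q:Q'|.$$

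For the main inequality itself, I would induct on $|G|$ and perform two reductions. First, via a Green/Harris--Knörr-style Navarro correspondence from \cite{N2002a}, I reduce to $Q\NM G$: the correspondence provides a vertex-preserving bijection between lifts of $\varphi$ in $G$ and lifts of its ${\bf N}_G(Q)$-correspondent, and the right-hand side $|{\bf N}_G(Q):{\bf N}_G(Q,\delta)|$ is unchanged. Second, with $Q\NM G$, if $G_\delta:={\bf N}_G(Q,\delta)<G$, the Clifford correspondence $\eta\mapsto\eta^G$ yields a Navarro-vertex-preserving bijection between lifts of the Clifford correspondent $\psi\in\I_\pi(G_\delta\mid\delta)$ of $\varphi$ with Navarro vertex $(Q,\delta)$ and lifts of $\varphi$ with the same Navarro vertex. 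Applying the induction hypothesis in $G_\delta$---where ${\bf N}_{G_\delta}(Q)={\bf N}_{G_\delta}(Q,\delta)=G_\delta$ and the bound collapses to $1$---gives $|L_\varphi(Q,\delta)|\le 1\le|G:G_\delta|$.

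The remaining case, which is the main obstacle, has $Q\NM G$ and $\delta$ being $G$-invariant, where the inequality reduces to the uniqueness assertion $|L_\varphi(Q,\delta)|\le 1$. Here the hypothesis $2\notin\pi$ enters decisively, through Isaacs' $B_\pi$-theory: I would construct a canonical $B_\pi$-extension of $\delta$ to $G$ and show that any lift of $\varphi$ with Navarro vertex $(Q,\delta)$ must equal the $G$-induction of this extension. My plan is to follow the template of Theorem 5.5 in \cite{JW2023} but with Navarro vertices in place of twisted vertices and $*$-vertices: the uniqueness from \cite{N2002a} streamlines the bookkeeping, yielding the ``more natural and straightforward'' proof the authors advertise, with the delicate point being the compatibility of the $B_\pi$-extension with the Navarro vertex data through the relevant $\pi$-induction step.
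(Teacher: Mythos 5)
Your derivation of the ``in particular'' statement from the main inequality is essentially the same orbit-counting argument the paper gives, and it is correct (the paper supplies the needed step, via its Lemma 3.1, that a lift with a linear Navarro vertex has one of the form $(Q,\delta)$ with $Q$ the fixed vertex of $\varphi$, which you use implicitly). The main inequality, however, is where your proposal diverges from the paper and where the real difficulties lie, and I see genuine gaps in each of your three reductions.

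First, the reduction to $Q\NM G$. Navarro-style correspondences do give a bijection $\Irr(G\mid Q,\delta)\to\Irr({\bf N}_G(Q)\mid Q,\delta)$, but this is a bijection of \emph{ordinary characters}. You need the stronger statement that it restricts to a bijection between lifts of a fixed $\varphi\in\I_\pi(G)$ and lifts of some single $\varphi'\in\I_\pi({\bf N}_G(Q))$ — i.e., a Green-type correspondence for $\pi$-partial characters compatible with the Navarro correspondence. No such result is cited or available in the form you need; without it, $L_\varphi(Q,\delta)$ could land in a union of several $L_{\varphi'}(Q,\delta)$ and the bound would not transfer. Second, with $Q\NM G$, your Clifford reduction to $G_\delta={\bf N}_G(Q,\delta)$ speaks of ``the Clifford correspondent $\psi\in\I_\pi(G_\delta\mid\delta)$ of $\varphi$.'' This does not make sense: $\varphi$ is a $\pi$-partial character, defined only on $\pi$-elements, while $Q$ is a $\pi'$-group, so there is no Clifford theory of $\varphi$ over $\delta\in\Irr(Q)$. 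If instead you apply the character-level Clifford correspondence to each lift $\chi\in L_\varphi(Q,\delta)$ over $\delta$, the resulting characters of $G_\delta$ need not all restrict (on $\pi$-elements) to one common $\psi\in\I_\pi(G_\delta)$, and the claimed bound $|L_\varphi(Q,\delta)|\le 1$ does not follow. Third, the $B_\pi$ step is a placeholder: $\delta$ lives on the $\pi'$-group $Q$, so a ``$B_\pi$-extension of $\delta$'' is not a defined object, and the ``delicate compatibility'' you flag is precisely the content that a proof would need to supply.

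The paper proceeds quite differently. It isolates a conditional Theorem~B, valid for all $\pi$, whose hypothesis is the counting inequality $|I_\sigma(R\mid Q)|\le|{\bf N}_S(Q):{\bf N}_R(Q)|$ for subgroups $Q\le R\le S$ and $\sigma\in I_\varphi(S\mid Q)$. Theorem~B is proved by induction on $|G|$: one fixes the maximal normal subgroup $N$ over which all constituents of the lifts are $\pi$-factored (well-defined by Lemma 3.2), partitions $L_\varphi(Q,\delta)$ using good constituents $\hat\eta\hat\delta\in\Irr(N)$ (Navarro's Lemma 5.3 analogue), and descends via Clifford correspondence to the inertia group $T=G_{\hat\eta\hat\delta}$ — \emph{not} to ${\bf N}_G(Q)$ or $G_\delta$. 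The hypothesis on $I_\sigma(\,\cdot\mid Q)$ is then used to control $|\Delta|$, and the base case $T=G$ is handled by uniqueness of $\pi$-special/$\pi'$-special factorizations. Theorem~A then follows because the hypothesis of Theorem~B holds when $2\notin\pi$, by Corollary 5.4 of \cite{JW2023} — this is the single place $2\notin\pi$ enters, not the nucleus/$B_\pi$ step. Your plan and the paper's proof therefore use different decompositions and different inductive targets, and as written your reductions do not go through.
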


The proof of Theorem A is inspired by some arguments in \cite{CL2010}.
Actually, we will establish the following Theorem B without any restriction on the set $\pi$ of primes, from which Theorem A follows easily.
We need to introduce one more notation that will be frequently used later.
Following Lewis \cite{L2010}, we use $I_\varphi(R|Q)$ to denote the set of irreducible $\pi$-partial characters of $R$
that have vertex $Q$ and induce $\varphi$, where $G$ is a $\pi$-separable group, $\varphi\in\I_\pi(G)$
and $R$ is a subgroup of $G$ containing $Q$.

\begin{thmA}\label{B}
Let $G$ be a $\pi$-separable group for a set $\pi$ of primes, and suppose that $\varphi\in\I_\pi(G)$ has vertex $Q$.
Assume that $|I_\sigma(R|Q)|\le |{\bf N}_S(Q):{\bf N}_R(Q)|$
for any subgroups $R,S$ of $G$ with $Q\le R\le S$ and any $\sigma\in I_\varphi(S|Q)$.
Then for each $\delta\in\Irr(Q)$,
$$|L_\varphi(Q,\delta)|\le |{\bf N}_G(Q):{\bf N}_G(Q,\delta)|.$$
\end{thmA}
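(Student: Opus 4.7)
The plan is to argue by induction on $|G|$. The base case $Q=G$ is immediate: $G$ is then a $\pi'$-group, $\varphi$ itself is an irreducible ordinary character, and both sides of the inequality equal $1$. So assume $Q<G$. The main device is the Navarro-vertex construction of \cite{N2002a}: for each $\chi\in L_\varphi(Q,\delta)$ one produces, by a chain of Clifford correspondences for $\pi$-partial characters, an ``induction pair'' $(U,\psi)$ with $Q\le U\le G$ and $\psi\in\Irr(U)$ such that $\psi^G=\chi$, the partial character $\tau:=\psi^0$ has vertex $Q$ and induces $\varphi$ (so $\tau\in I_\varphi(U|Q)$), and $\psi$ has Navarro vertex $(Q,\delta)$ computed inside $U$. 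This data is canonical up to $G$-conjugacy, which is the structural input I take from \cite{N2002a}, and it is in the same spirit as the induction pairs used in \cite{CL2010}.

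Given this setup, the proof splits into two cases. In the non-primitive case one may take $U$ proper in $G$. Since the hypothesis of Theorem~B descends from $G$ to any subgroup, the inductive hypothesis applied to $\tau\in\I_\pi(U)$ yields
$$|L_\tau(Q,\delta)|\le|{\bf N}_U(Q):{\bf N}_U(Q,\delta)|,$$
while the hypothesis itself (with $R=U$, $S=G$) gives $|I_\varphi(U|Q)|\le|{\bf N}_G(Q):{\bf N}_U(Q)|$. Uniqueness of the Navarro vertex makes induction from $U$ to $G$ injective on each $L_\tau(Q,\delta)$, and pairing lifts with their induction data modulo the action of ${\bf N}_G(Q)$ on $I_\varphi(U|Q)$ should produce, after careful orbit bookkeeping to avoid overcounting, the desired estimate $|L_\varphi(Q,\delta)|\le|{\bf N}_G(Q):{\bf N}_G(Q,\delta)|$.

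The serious obstacle is the primitive case, where $U=G$ for every $\chi$: each $\chi\in L_\varphi(Q,\delta)$ is itself $\pi$-quasi-primitive, and $\varphi$ is not induced from any proper subgroup containing $Q$ by a $\pi$-partial character of vertex $Q$. Here I would invoke the hypothesis with $R=Q$ and $S=G$, which gives $|I_\varphi(Q|Q)|\le|{\bf N}_G(Q):Q|$, and then connect each lift directly to constituents of its restriction to $Q$: the uniqueness of the Navarro vertex should imply that distinct lifts in $L_\varphi(Q,\delta)$ produce, up to ${\bf N}_G(Q,\delta)$-conjugation, distinct data at the level of $Q$, so that an orbit-counting argument for the action of ${\bf N}_G(Q)$ on $\Irr(Q)$ delivers the bound. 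I expect the bulk of the genuine technical work to live in this primitive step, together with the bookkeeping needed to verify that Navarro vertices behave properly under all the Clifford correspondences used to define them.
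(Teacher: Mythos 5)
Your proposal identifies the correct high-level strategy — induction on $|G|$ combined with Clifford-type reductions built from the Navarro-vertex machinery — but it defers the decisive structural and combinatorial work to unspecified ``orbit bookkeeping,'' and that bookkeeping is precisely where the theorem lives. The paper's reduction does \emph{not} go directly from $G$ to a normal-nucleus subgroup $U$ of each individual $\chi$ (which varies with $\chi$ and is hard to reconcile across different lifts and across conjugacy). Instead, Lemma~\ref{max-N} is used to fix a single normal subgroup $N\NM G$, maximal with the property that every $\chi\in L_\varphi(Q,\delta)$ has $\pi$-factored constituents on $N$; this $N$ depends only on $\varphi$ and $(Q,\delta)$, and you have no analogue of it. One then partitions $L_\varphi(Q,\delta)$ according to the good constituent $\hat\eta\hat\delta$ of $\chi_N$ from Lemma~\ref{N5.3}, where $\eta$ runs over ${\bf N}_G(Q,\delta)$-orbit representatives $\Lambda$ of those constituents of $\varphi_N$ whose Clifford correspondent has vertex $Q$, $\hat\eta$ is the $\pi$-special lift of $\eta$, and $\hat\delta$ is the canonical $\pi'$-special character of $N$ determined by $\delta$. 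The crucial matching identity, absent from your sketch, is that the target quantity also decomposes over the same index set:
$$|{\bf N}_G(Q):{\bf N}_G(Q,\delta)|=\sum_{\eta\in\Lambda}|{\bf N}_{G_\eta}(Q):{\bf N}_{G_\eta}(Q,\delta)|,$$
so that it suffices to bound each piece $|L_\varphi(Q,\delta,\hat\eta\hat\delta)|$ separately. That bound comes from a Clifford-correspondence bijection down to $T=G_{\hat\eta\hat\delta}$, multiplying the hypothesis of the theorem (which controls $|\Delta|=|I_{\varphi_\eta}(T|Q)|$) by the inductive bound inside $T$; Lemma~\ref{good} is what guarantees the Clifford correspondents over good constituents genuinely have vertex $Q$, and this is needed to make the counting go through.

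Your handling of the ``primitive case'' is also off. The degenerate terminal case of the induction is not ``each $\chi$ is $\pi$-quasi-primitive'' and it is not resolved by the hypothesis with $R=Q$, $S=G$ together with orbit counting on $\Irr(Q)$ — that would overshoot. The degenerate case is $T=G$, i.e.\ $\hat\eta\hat\delta$ is $G$-invariant; by maximality of $N$ this forces $N=G$, so every $\chi\in L_\varphi(Q,\delta)$ is $\pi$-factored, whence $\chi_\pi$ is pinned down by $\varphi$ and $\chi_{\pi'}$ by $\delta$, giving $|L_\varphi(Q,\delta)|=1$ and a trivial bound. Without the canonical $N$, the good-constituent partition, and the matching sum decomposition of $|{\bf N}_G(Q):{\bf N}_G(Q,\delta)|$, the heuristic ``uniqueness of the Navarro vertex makes induction injective'' does not by itself assemble into the stated inequality; the proposal as written has a genuine gap at exactly the step it labels ``careful orbit bookkeeping.''
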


As another application of Theorem B, we consider an extreme case where the prime set $\pi$ consists of a single prime $p$.
We mention that the set $\I_p(G)$ of irreducible $\{p\}$-partial characters of $G$
plays an important role in studying $M$-groups; see Theorem F of \cite{I1996} and Theorem 1 of \cite{L2010a} for example.

\begin{thmA}\label{C}
Suppose that $\varphi\in\I_\pi(G)$ has vertex $Q$, where $G$ is a $\pi$-separable group.
If $\pi$ consists of just a single prime, then $$|L_\varphi(Q,\delta)|\le |{\bf N}_G(Q):{\bf N}_G(Q,\delta)|$$
for each $\delta\in\Irr(Q)$.
\end{thmA}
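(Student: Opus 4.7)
The plan is to apply Theorem B with $\pi=\{p\}$, thereby reducing the proof of Theorem C to verifying the hypothesis of Theorem B: for all subgroups $Q\le R\le S\le G$ and every $\sigma\in I_\varphi(S|Q)$,
$$|I_\sigma(R|Q)|\le |{\bf N}_S(Q):{\bf N}_R(Q)|.$$

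To establish this inequality I would exhibit a transitive action of $H:={\bf N}_S(Q)\cap {\bf N}_S(R)$ on $I_\sigma(R|Q)$ whose stabilizers all contain ${\bf N}_R(Q)$. The action is by conjugation: for $n\in H$ and $\tau\in I_\sigma(R|Q)$, the conjugate $\tau^n$ lies in $\I_\pi(R)$ (since $R^n=R$), has vertex $Q^n=Q$, and satisfies $(\tau^n)^S=(\tau^S)^n=\sigma^n=\sigma$, so $\tau^n\in I_\sigma(R|Q)$. Moreover ${\bf N}_R(Q)=R\cap {\bf N}_S(Q)$ is contained in $R$ and therefore fixes every character of $R$, in particular every $\tau$. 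Granting transitivity, the orbit–stabilizer theorem gives
$$|I_\sigma(R|Q)|=|H:H_\tau|\le |H:{\bf N}_R(Q)|\le |{\bf N}_S(Q):{\bf N}_R(Q)|,$$
and Theorem B then completes the argument.

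The main obstacle is the transitivity of the action, and this is exactly the point where the hypothesis $\pi=\{p\}$ plays a decisive role. I would argue by induction on the index $[S:R]$, inserting a proper intermediate subgroup between $R$ and $S$ to reduce to the case where $[S:R]$ is a prime. In that case, a Mackey-style decomposition of the $\pi$-induction $\tau^S=\sigma$, combined with the uniqueness up to conjugation of the nucleus/vertex data attached to each $\tau\in I_\sigma(R|Q)$, should produce, for any two elements $\tau_1,\tau_2\in I_\sigma(R|Q)$, an element of $S$ conjugating $\tau_1$ to $\tau_2$. The fact that $\pi$ is a singleton is what keeps Isaacs' $\pi$-induction rigid enough for this standard Clifford-correspondence pattern to apply in the $\pi$-partial setting. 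The subtlest point of the whole argument is arranging that the conjugating element can always be chosen inside ${\bf N}_S(Q)$ and not merely in $S$; this is the step I expect to demand the most technical care, and I would address it by tracking how the vertex $Q$ propagates through the intermediate subgroup used in the inductive step.
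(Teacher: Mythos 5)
Your reduction to Theorem B is the right first move and matches the paper, but the inequality
$|I_\sigma(R|Q)|\le|{\bf N}_S(Q):{\bf N}_R(Q)|$ is where all the work lies, and your proposal does not actually prove it. You propose to show that $H={\bf N}_S(Q)\cap{\bf N}_S(R)$ acts \emph{transitively} on $I_\sigma(R|Q)$; granting that, the orbit--stabilizer count you write down is fine. But transitivity is a genuinely stronger assertion than the bound itself (it would, for instance, immediately imply Lewis's conjectural inequality in full strength for any $\pi$, which is not known), and you offer only a sketch built on phrases like ``a Mackey-style decomposition \dots should produce'' and ``I expect''. No mechanism is identified by which the singleton hypothesis on $\pi$ forces all members of $I_\sigma(R|Q)$ into a single $H$-orbit, nor is the crucial step of confining the conjugating element to ${\bf N}_S(Q)$ addressed beyond saying it ``demands technical care.'' As it stands this is a restatement of the difficulty, not a resolution of it.

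The paper's proof is structurally quite different and never attempts transitivity. When $2\notin\pi$ it quotes the earlier $\pi$-induction bound (Corollary 5.4 of \cite{JW2023}); the new content is the case $\pi=\{2\}$, which is handled by taking a minimal counterexample $(G,V)$ and invoking Lewis's structure theorem (Theorem 2.1 of \cite{L2010}) to reduce to the configuration where $V$ is maximal of $2$-power index, the core $N$ carries a $G$-invariant $\alpha\in\I_\pi(N)$ of $\pi$-degree, and $\alpha^K$ is homogeneous over the chief factor $K/N$. Passing to the $\pi$-special lifts $\tilde\alpha,\tilde\beta$, one shows $\tilde\alpha$ is fully ramified with respect to $K/N$, then lifts the picture one more level through a chief factor $M/K$ (a $p$-group with $p\ne 2$, with ${\bf C}_{K/N}(U/N)=1$ where $U=M\cap V$), and a Frobenius reciprocity and vanishing argument forces the extension $\rho$ of $\tilde\alpha$ to $U$ to be simultaneously fully ramified over $U/N$ and an extension of $\tilde\alpha$ --- a contradiction. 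The hypothesis that $\pi$ is a single prime enters concretely through $|G:V|$ being a power of that prime (so the fully-ramified index $e^2=|K:N|$ is a prime power) and through the solvable-maximal-subgroup lemma controlling $M/K$. None of this resembles a conjugacy/transitivity argument, and you would need to supply an entirely new proof of your transitivity claim before your outline could be completed.
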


The following is the Brauer character version of Theorem C,
which is useful only when $G$ has even order
since Cossey established his conjecture for odd-order groups (see Theorem 1.2 of \cite{C2007}).

\begin{corA}
Let $G$ be a group of order $p^aq^b$, where $p, q$ are primes,
and suppose that $\varphi\in\IBr_p(G)$ has vertex $Q$.
Then for each $\delta\in\Irr(Q)$,
$$|L_\varphi(Q,\delta)|\le |{\bf N}_G(Q):{\bf N}_G(Q,\delta)|.$$
In particular, if $p>2$, then $|L_\varphi|\le |Q:Q'|$.
\end{corA}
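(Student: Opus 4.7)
The plan is to deduce the Corollary directly from Theorem C, modulo a translation from Brauer characters to $\pi$-partial characters. Since $|G| = p^a q^b$, the group $G$ is solvable by Burnside's $p^aq^b$-theorem, so in particular $p$-solvable, and hence $\IBr_p(G) = \I_{\{p\}'}(G)$ by Isaacs' correspondence between Brauer and $\pi$-partial characters. Crucially, the only prime in $\{p\}' \cap \pi(G)$ is $q$, so $\I_{\{p\}'}(G) = \I_{\{q\}}(G)$, and the Navarro vertex of $\varphi$ as a Brauer character agrees with its Navarro vertex as a $\{q\}$-partial character (a $\{q\}'$-subgroup is just a $p$-subgroup here). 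Under this identification, $L_\varphi(Q,\delta)$ is the same object whether we view $\varphi \in \IBr_p(G)$ or $\varphi \in \I_{\{q\}}(G)$.

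With this identification in hand, I would apply Theorem C to the $\pi$-separable group $G$ with $\pi = \{q\}$, a single prime. Theorem C gives exactly
$$|L_\varphi(Q,\delta)| \le |\mathbf{N}_G(Q):\mathbf{N}_G(Q,\delta)|$$
for every $\delta \in \Irr(Q)$, which is the first assertion of the Corollary.

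For the second assertion, assume $p > 2$, so that $2 \in \pi = \{p\}'$. In this situation the Cossey--Lewis uniqueness theorem (Theorem 4.1 of \cite{CL2012}) applies, and every lift $\chi \in L_\varphi$ has a Navarro vertex $(Q,\delta)$ with $\delta$ linear, unique up to $\mathbf{N}_G(Q)$-conjugacy. Therefore $L_\varphi$ decomposes as the disjoint union of the sets $L_\varphi(Q,\delta)$ as $\delta$ runs through a set of representatives for the $\mathbf{N}_G(Q)$-orbits on $\Lin(Q)$. Summing the inequality just obtained over these representatives gives
$$|L_\varphi| \;\le\; \sum_{\delta} |\mathbf{N}_G(Q):\mathbf{N}_G(Q,\delta)| \;=\; |\Lin(Q)| \;=\; |Q:Q'|,$$
the orbit-stabilizer identity providing the first equality.

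The main obstacle is essentially nonexistent at this stage, since Theorem C has absorbed all of the real work (the inductive machinery reducing to Theorem B and the $\pi$-induction arguments). The only conceptual step is the identification $\IBr_p(G) = \I_{\{q\}}(G)$ for two-prime groups, which makes the single-prime hypothesis of Theorem C available; and the only mild technical point is invoking the Cossey--Lewis uniqueness result to ensure that the sum over linear $\delta$ covers all of $L_\varphi$ when $p$ is odd.
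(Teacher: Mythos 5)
Your proposal is correct and follows the same route as the paper: note that $G$ is solvable by Burnside's $p^aq^b$ theorem, identify $\IBr_p(G)$ with $\I_\pi(G)$ for $\pi=p'=\{q\}$, and apply Theorem C. The paper's own proof is terser and leaves the ``in particular'' clause implicit; you correctly fill that in by invoking the Cossey--Lewis conjugacy result (Theorem 4.1 of \cite{CL2012}) to guarantee that for $p>2$ every lift has a linear Navarro vertex, so $L_\varphi=\tilde L_\varphi$, and then decomposing $L_\varphi$ into the sets $L_\varphi(Q,\delta)$ over ${\bf N}_G(Q)$-orbit representatives of $\Lin(Q)$ and summing --- which is precisely the mechanism the paper uses for the analogous ``in particular'' clause of Theorem A. Your minor terminological slip (speaking of ``the Navarro vertex of $\varphi$'' when Navarro vertices are attached to ordinary characters, whereas $\varphi$ has a vertex in Isaacs' sense) does not affect the argument.
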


Throughout this paper,
we will frequently use the following notation, often without explicit explanation.
Given a complex character $\chi$ of a $\pi$-separable group $G$,
we always use $\chi^0$ to denote
restriction of $\chi$ to the set of $\pi$-elements of $G$.
If $N\NM G$ and $\theta$ lies in $\Irr(N)$ or $\I_\pi(N)$, we write
$G_\theta$ for the inertial group of $\theta$ in $G$.
Also, if $\theta\in\Irr(N)$ is $\pi$-factored, then we use $\theta_\pi$ and $\theta_{\pi'}$
to denote the unique $\pi$-special factor and $\pi'$-special factor of $\theta$, respectively,
so that $\theta=\theta_\pi\theta_{\pi'}$.

\section{Navarro vertices and normal subgroups}
In this section, we briefly review some properties of Navarro vertices
from \cite{N2002a} needed for our proofs.

Let $G$ be a $\pi$-separable group, $Q$ a $\pi'$-subgroup of $G$ and $\delta\in\Irr(Q)$.
Given $\chi\in\Irr(G)$, recall that the pair $(Q,\delta)$ is a Navarro vertex for $\chi$
if there exists a normal nucleus $(W,\gamma)$ of $\chi$ such that
$\gamma$ is $\pi$-factored with $\gamma^G=\chi$,
$Q$ is a Hall $\pi'$-subgroup of $W$ and $\delta$ is
the restriction to $Q$ of the $\pi'$-special factor $\gamma_{\pi'}$ of $\gamma$.
In this situation, we will say that the Navarro vertex $(Q,\delta)$
is afforded by the normal nucleus $(W,\gamma)$ for $\chi$.
By construction, we see that the normal nucleus for $\chi$ is uniquely defined up to conjugacy,
so all the Navarro vertices for $\chi$ are conjugate in $G$.
For more details, see, for example, Section 3 of \cite{C2007} or Section 2 of \cite{JW2023} .
Following Navarro \cite{N2002}, we write $\Irr(G|Q,\delta)$ for the set of all members of $\Irr(G)$ having Navarro vertex $(Q,\delta)$.

We need an easy preliminary result; see the comments before Lemma 5.1 of \cite{N2002a}.
For the reader's convenience, we include a proof here.

\begin{lem}\label{nuc-conj}
Let $\chi\in\Irr(G|Q,\delta)$, where $G$ is a $\pi$-separable group.
Then any two normal nuclei that afford the Navarro vertex $(Q,\delta)$ for $\chi$ are ${\bf N}_G(Q,\delta)$-conjugate.
\end{lem}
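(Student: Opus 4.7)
The plan is to exploit the fact, recalled in the paragraph immediately preceding the lemma, that the normal nucleus of $\chi$ is already unique up to $G$-conjugacy, and then to refine the conjugating element so that it lies in ${\bf N}_G(Q,\delta)$. So I would start by letting $(W_1,\gamma_1)$ and $(W_2,\gamma_2)$ be two normal nuclei for $\chi$ both affording the Navarro vertex $(Q,\delta)$ and picking $g\in G$ with $(W_1^g,\gamma_1^g)=(W_2,\gamma_2)$. The task then becomes: modify $g$ by a factor $w\in W_2$ so that $gw$ normalizes both $Q$ and $\delta$, without destroying the conjugacy of the two nuclei.

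For the first step I would invoke the Hall-subgroup structure. Since $Q$ is a Hall $\pi'$-subgroup of $W_1$, its image $Q^g$ is a Hall $\pi'$-subgroup of $W_2$; on the other hand, $Q$ itself is a Hall $\pi'$-subgroup of $W_2$ by hypothesis. Two Hall $\pi'$-subgroups of the $\pi$-separable group $W_2$ are conjugate in $W_2$, so there is $w\in W_2$ with $Q^{gw}=Q$, giving $gw\in{\bf N}_G(Q)$. Because $w\in W_2$ normalizes $W_2$ and fixes $\gamma_2$, we still have $(W_1,\gamma_1)^{gw}=(W_2,\gamma_2)^w=(W_2,\gamma_2)$, so the nucleus-conjugacy is preserved.

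The remaining point, and arguably the only nontrivial one, is to verify $\delta^{gw}=\delta$. I would track the $\pi'$-special factor throughout: since conjugation preserves $\pi$-factorization, $(\gamma_1)_{\pi'}^g=(\gamma_2)_{\pi'}$, and combining this with the hypothesis $\delta=(\gamma_1)_{\pi'}|_Q=(\gamma_2)_{\pi'}|_Q$ yields $(\gamma_2)_{\pi'}|_{Q^g}=\delta^g$. Since $w\in W_2$ fixes $(\gamma_2)_{\pi'}$, conjugating once more gives $(\gamma_2)_{\pi'}|_{Q^{gw}}=\delta^{gw}$; but $Q^{gw}=Q$, and the restriction of $(\gamma_2)_{\pi'}$ to $Q$ is $\delta$ by hypothesis, forcing $\delta^{gw}=\delta$. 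Hence $gw\in{\bf N}_G(Q,\delta)$ and the lemma follows. I do not expect any genuine obstacle here; the argument is essentially a routine two-step Frattini-type refinement (first normalize $Q$ using a Hall conjugation inside $W_2$, then observe that the same adjustment automatically fixes $\delta$ because $w$ fixes the ambient character $\gamma_2$).
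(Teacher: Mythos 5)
Your proposal is correct and takes essentially the same approach as the paper's own proof: use the known $G$-conjugacy of normal nuclei, adjust the conjugating element by an element of the target nucleus via Hall conjugacy so as to normalize $Q$, and then verify that this refined element automatically fixes $\delta$ because the adjustment fixes the ambient $\pi'$-special factor. The only difference is cosmetic (you multiply the adjusting element on the right, the paper on the left).
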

\begin{proof}
Let $(W,\gamma)$ and $(U,\rho)$ be two normal nuclei for $\chi$ that afford the Navarro vertex $(Q,\delta)$.
Then by definition, we have $Q\in\Hall_{\pi'}(W)\cap \Hall_{\pi'}(U)$ and $\delta=(\gamma_{\pi'})_Q=(\rho_{\pi'})_Q$.
By the uniqueness (up to conjugacy), we see that $(U,\rho)=(W,\gamma)^x$ for some $x\in G$,
which implies that $U=W^x$ and $\rho=\gamma^x$.
In particular, we have $\rho_{\pi'}=(\gamma_{\pi'})^x$ (see Theorem 2.2 of \cite{I2018}).
Now both $Q$ and $Q^x$ are Hall $\pi'$-subgroups of $U=W^x$,
so there is an element $w\in W$ such that $Q^{wx}=Q$.
It follows that $wx\in{\bf N}_G(Q)$, and thus
$$\delta^{wx}=((\gamma_{\pi'})_Q)^{wx}=((\gamma_{\pi'})^{wx})_Q
=((\gamma_{\pi'})^x)_Q=(\rho_{\pi'})_Q=\delta.$$
This shows that $wx\in {\bf N}_G(Q,\delta)$ and thus $(U,\rho)=(W,\gamma)^x=(W,\gamma)^{wx}$.
\end{proof}

Now, let $N\NM G$, and consider $\chi\in\Irr(G|Q,\delta)$, where $G$ is a $\pi$-separable group.
Let $\theta$ be an irreducible constituent of $\chi_N$.
Following Navarro \cite{N2002a}, we say that $\theta$ is \emph{good} for $\chi$
or that $\theta$ is a \emph{good constituent} of $\chi_N$ (depending on the pair $(Q,\delta)$),
if there exists a normal nucleus $(W,\gamma)$ of $\chi$
affording the Navarro vertex $(Q,\delta)$, and such that $N\le W$, $\theta$ lies under $\gamma$
and $\theta_\pi$ is $Q$-invariant.
Note that if $\theta$ is good for $\chi$,
then it is clear that $\theta$ is necessarily $\pi$-factored because $\gamma$ is.

\begin{lem}\label{N5.3}
Let $G$ be a $\pi$-separable group, and let $\chi\in\Irr(G|Q,\delta)$.
Suppose that $N\NM G$ and every irreducible constituent of $\chi_N$ is $\pi$-factored.
Then the following hold.

{\rm (1)} $\chi_N$ has a good constituent.

{\rm (2)} All of the good constituents of $\chi_N$ form a single ${\bf N}_G(Q,\delta)$-orbit.

{\rm (3)} If $\delta$ is linear, then each good constituent of $\chi_N$ has the form $\alpha\beta$,
where $\alpha\in\Irr(N)$ is $\pi$-special that is $Q$-invariant and $\beta\in\Irr(N)$ is $\pi'$-special with $\beta_{Q\cap N}=\delta_{Q\cap N}$. In particular, $\beta$ is uniquely determined by $\delta$ and hence is ${\bf N}_G(Q,\delta)$-invariant.
\end{lem}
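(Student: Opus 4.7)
My plan is to handle all three parts by choosing a normal nucleus $(W,\gamma)$ of $\chi$ affording $(Q,\delta)$ with $N\le W$, and then reading the good constituents directly from $\gamma_N$. The main preliminary step for part (1) is to establish the existence of such a nucleus with $N\le W$. I would revisit the inductive construction of the normal nucleus: at each stage $(W_i,\gamma_i)$ one picks a normal subgroup of $W_i$ on which $\gamma_i$ is not homogeneous, selects an irreducible constituent $\theta$, and passes to the pair $(G_\theta, \psi)$ with $\psi\in\Irr(G_\theta|\theta)$ inducing $\gamma_i$. If $N\le W_i$ and $N$ is contained in the chosen normal subgroup, the hypothesis on $\chi_N$ ensures the constituents of $(\gamma_i)_N$ remain $\pi$-factored; a compatible choice of $\theta$ then keeps $N\le G_\theta$. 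Iterating, one obtains $N\le W$ at termination.

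With such a nucleus in hand, any irreducible constituent of $\gamma_N$ is $\pi$-factored (since $\gamma$ is) and has the form $\alpha\beta$. To force $\alpha$ to be $Q$-invariant, I would use a coprime-action argument in the spirit of Lemma 4.1 of \cite{N2002a}: the $\pi$-special constituents of $(\gamma_\pi)_N$ form a single $W$-orbit, and the $\pi'$-subgroup $Q\le W$ permuting this set must fix at least one element, giving part (1). For part (2), given two good constituents witnessed by nuclei $(W,\gamma)$ and $(W',\gamma')$, Lemma \ref{nuc-conj} provides $x\in{\bf N}_G(Q,\delta)$ conjugating one nucleus to the other; within a single nucleus, transitivity of ${\bf N}_W(Q)$ on the $Q$-invariant $\pi$-special constituents of $(\gamma_\pi)_N$—again by the same coprime-action machinery—combines with this to produce a single ${\bf N}_G(Q,\delta)$-orbit of good constituents.

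For part (3), assume $\delta$ is linear. A good constituent $\theta=\alpha\beta$ has $\alpha=\theta_\pi$ that is $Q$-invariant by definition. For $\beta=\theta_{\pi'}$, note that $Q\cap N$ is a Hall $\pi'$-subgroup of $N$, so standard restriction results from Isaacs' $\pi$-theory in \cite{I2018} show that $\beta$ is uniquely determined by its restriction to $Q\cap N$ among $\pi'$-special characters of $N$. A routine restriction computation identifies $\beta_{Q\cap N}$ with $(\gamma_{\pi'})_{Q\cap N}=\delta_{Q\cap N}$, which, being linear, pins down $\beta$ uniquely; the uniqueness then forces $\beta$ to be invariant under ${\bf N}_G(Q,\delta)$. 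The main obstacle is clearly part (1): the normal nucleus construction is non-canonical, so the substantive work lies in verifying that the inductive choices can be harmonized to realize both $N\le W$ and the prescribed Navarro vertex $(Q,\delta)$ simultaneously—this is precisely where the hypothesis on $\pi$-factored constituents of $\chi_N$ enters essentially.
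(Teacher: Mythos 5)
Your proposal is correct and takes essentially the same route as the paper: for parts (1) and (2) the paper simply cites the $\pi$-analog of Theorem 5.3 of \cite{N2002a} and notes that the proof carries over verbatim, and your sketch reconstructs exactly that argument (a nucleus containing $N$, the coprime fixed-point argument giving a $Q$-invariant $\pi$-special constituent, and Lemma \ref{nuc-conj} plus transitivity of ${\bf N}_W(Q)$ on $Q$-fixed constituents for the single orbit), while for part (3) your computation matches the paper's use of the facts that $\gamma_{\pi'}$ is linear and that $Q\cap N\in\Hall_{\pi'}(N)$. One small remark: the "harmonization" worry you raise at the end is automatic, since $N\NM G$ makes the condition $N\le W$ invariant under $G$-conjugation of normal nuclei — once $N$ lies in the unique maximal normal subgroup $L$ of $G$ with all constituents of $\chi_L$ $\pi$-factored, every normal nucleus contains $N$, in particular any one affording $(Q,\delta)$.
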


\begin{proof}
Parts (1) and (2) are the $\pi$-analog of Theorem 5.3 of \cite{N2002a},
and since the proof is exactly the same (with $\pi$ in place of $p$),
we omit it here.

For (3), let $\theta$ be a good constituent for $\chi_N$.
By definition, there exists a normal nucleus $(W,\gamma)$ for $\chi$ that affords the Navarro vertex $(Q,\delta)$,
and such that $N\le W$, $\theta$ lies under $\gamma$ and $\theta_\pi$ is $Q$-invariant.
Since $(\gamma_{\pi'})_Q=\delta$, it follows that $\gamma_{\pi'}$ is linear,
and thus $(\gamma_{\pi'})_N=\theta_{\pi'}$.
Write $\alpha=\theta_\pi$ and $\beta=\theta_{\pi'}$, so that $\theta=\alpha\beta$ and $\alpha$ is $Q$-invariant.
Now $\beta_{Q\cap N}=((\gamma_{\pi'})_Q)_{Q\cap N}=\delta_{Q\cap N}$,
and since $\beta$ is the unique $\pi'$-special character of $N$ lying over $\delta_{Q\cap N}$
(Lemma 2.10 of \cite{I2018}),
it follows that $\beta$ is uniquely determined by $\delta$
and is ${\bf N}_G(Q,\delta)$-invariant.
\end{proof}

The following result is crucial in the proof of Theorem B in the introduction.

\begin{lem}\label{good}
Let $G$ be $\pi$-separable, and suppose that $\chi\in \Irr(G|Q,\delta)$
is a lift of $\varphi\in\I_\pi(G)$, where $\delta$ is a linear character of $Q$.
Assume further that $N\NM G$ and $\theta\in\Irr(N)$ is good for $\chi$.
Then $\theta^0\in\I_\pi(N)$ has $\pi$-degree and the Clifford correspondent of $\varphi$ over $\theta^0$ has vertex $Q$.
\end{lem}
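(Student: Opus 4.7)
The plan is to read off the structure of the good constituent $\theta$ from Lemma~\ref{N5.3}(3), then exploit the normal nucleus of $\chi$ to produce inside $T=G_{\theta^0}$ a lift of the Clifford correspondent of $\varphi$ whose Navarro vertex is still $(Q,\delta)$.

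First I would apply Lemma~\ref{N5.3}(3): since $\theta$ is good for $\chi$ and $\delta$ is linear, $\theta=\alpha\beta$ with $\alpha\in\Irr(N)$ $\pi$-special and $Q$-invariant, and $\beta\in\Irr(N)$ $\pi'$-special with $\beta_{Q\cap N}=\delta_{Q\cap N}$. The right-hand side is a linear character because $\delta$ is, so $\beta$ is a \emph{linear} $\pi'$-special character of $N$; such a character is trivial on every $\pi$-element of $N$, so $\beta^0=1_N^0$ and hence $\theta^0=\alpha^0$. The standard fact that $(\cdot)^0$ sends $\pi$-special characters of $N$ injectively into $\I_\pi(N)$ then gives $\theta^0=\alpha^0\in\I_\pi(N)$ with $\pi$-degree $\alpha(1)=\theta(1)$. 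Setting $T=G_{\theta^0}$, the same injectivity yields $T=G_{\alpha^0}=G_\alpha$, so $Q\le T$ by $Q$-invariance of $\alpha$.

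For the vertex assertion, fix a normal nucleus $(W,\gamma)$ of $\chi$ affording $(Q,\delta)$ with $N\le W$ and $\theta$ under $\gamma$, and write $\gamma=\gamma_\pi\gamma_{\pi'}$. Because $\delta=(\gamma_{\pi'})_Q$ is linear, $\gamma_{\pi'}$ is linear, and matching $\pi$- and $\pi'$-special factors of the constituents of $\gamma_N$ forces $\beta=(\gamma_{\pi'})_N$. By the defining quasi-primitivity of a normal nucleus, every constituent of $\gamma_M$ is $W$-invariant for every $M\NM W$; applied to $M=N$, this says $\theta$ is $W$-invariant, hence so is $\alpha=\theta\beta^{-1}$ (as $\beta$ is $W$-invariant). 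Therefore $W\le G_\alpha=T$.

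Now let $\chi_0\in\Irr(G_\theta|\theta)$ be the Clifford correspondent of $\chi$ over $\theta$. Since $W\le G_\theta\le T$, a short Frobenius-reciprocity argument identifies $\gamma^{G_\theta}$ with $\chi_0$, so $(W,\gamma)$ is also a normal nucleus for $\chi_0$. As $T_\theta=G_\theta$, Clifford correspondence in $T$ produces an irreducible $\chi_0^T\in\Irr(T|\theta)$ whose normal nucleus remains $(W,\gamma)$ and whose Navarro vertex is therefore $(Q,\delta)$, and $(\chi_0^T)^G=\chi$ by transitivity of induction. Passing to $\pi$-partial characters, $(\chi_0^T)^0$ lies over $\theta^0=\alpha^0$ (since $(\chi_0^T)_N$ is supported on the $T$-orbit of $\theta$, all of whose members have the same value $\alpha^0$ on $\pi$-elements) and induces to $\varphi$. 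Decomposing $(\chi_0^T)^0=\sum c_i\xi_i$ with distinct $\xi_i\in\I_\pi(T|\alpha^0)$, the Clifford bijection $\I_\pi(T|\alpha^0)\to\I_\pi(G|\alpha^0)$ via $(\cdot)^G$ sends these to distinct irreducibles, so $\sum c_i\xi_i^G=\varphi$ collapses to a single term; this identifies $(\chi_0^T)^0$ with the Clifford correspondent $\varphi_1\in\I_\pi(T|\theta^0)$ of $\varphi$. Thus $\chi_0^T$ is a lift of $\varphi_1$ with Navarro vertex $(Q,\delta)$, and so $\varphi_1$ has vertex $Q$.

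The main obstacle I anticipate is the bookkeeping around the normal nucleus: extracting the $W$-invariance of $\theta$ (and hence of $\alpha$) from the quasi-primitivity of $\gamma$, and verifying that this nucleus survives unchanged under the Clifford correspondence down to $G_\theta$ and back up to $T$. Once $W\le T$ is in hand, the identification of $(\chi_0^T)^0$ with $\varphi_1$ via the Clifford bijection is essentially routine.
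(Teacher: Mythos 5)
Your argument has a genuine gap at the step ``By the defining quasi-primitivity of a normal nucleus, every constituent of $\gamma_M$ is $W$-invariant for every $M\NM W$; applied to $M=N$, this says $\theta$ is $W$-invariant.'' The normal nucleus $(W,\gamma)$ is only required to have $\gamma$ \emph{$\pi$-factored} with $\gamma^G=\chi$; it is not part of the definition (nor is it a consequence) that $\gamma$ is quasi-primitive. A $\pi$-factored character can easily restrict inhomogeneously to a normal subgroup: for instance the degree-$3$ character of $A_4$ is $\pi$-special for $\pi=\{3\}$, hence $\pi$-factored, yet its restriction to $V_4$ splits into three distinct linear characters. More to the point, in the normal nucleus construction one only ever passes to the stabilizer of the chosen good constituent at the preceding stage $N^*$; nothing forces a constituent of $\gamma_N$ for a smaller normal subgroup $N\le N^*$ to be $W$-invariant. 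Since the $\pi$-special lift map is injective, the non-invariance of $\alpha$ passes to $\alpha^0=\theta^0$, so the containment $W\le G_{\theta^0}=T$ that your whole construction of $\chi_0^T$ rests on can fail.

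The paper avoids this by never claiming $W\le T$. It instead sets $T\cap W$ to be the inertia group of $\theta^0$ in $W$, takes the Clifford correspondent $\eta$ of $\gamma^0\in\I_\pi(W)$ over $\theta^0$ so that $\gamma^0=\eta^W$, and observes that because $\gamma^0(1)$ is a $\pi$-number (as $\gamma_{\pi'}$ is linear), both $|W:T\cap W|$ and $\eta(1)$ are $\pi$-numbers; hence $Q$ is still a Hall $\pi'$-subgroup of $T\cap W$. Then $\eta^T$ lies over $\theta^0$ and under $\varphi$, which forces $\eta^T=\tau$ and exhibits $Q$ as a vertex of $\tau$ directly from the definition. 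So the fix is to replace your ``push the nucleus into $T$'' idea with the induction of the $\pi$-partial Clifford correspondent $\eta$ from $T\cap W$ to $T$; the degree/index bookkeeping you feared turns out to be the whole content of the argument.
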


\begin{proof}
By definition, there is a normal nucleus $(W,\gamma)$ for $\chi$
that affords the Navarro vertex $(Q,\delta)$, such that $N\le W$, $\theta$ lies under $\gamma$ and $\theta_\pi$ is $Q$-invariant.
Since $\delta$ is linear, it follows from Lemma \ref{N5.3} that
$\theta=\alpha\beta$, where $\alpha=\theta_\pi$ is $Q$-invariant and $\beta=\theta_{\pi'}$ is linear.
Then $\theta^0=\alpha^0\in\I_\pi(N)$, which clearly has $\pi$-degree.
Also, $\theta^0$ is $Q$-invariant because $\alpha$ is.
Now $\chi=\gamma^G$ and $\chi^0=\varphi$, so $\gamma^0$ induces $\varphi$
and thus lies under $\varphi$ (see Lemma 5.8 of \cite{I2018}). Furthermore, since $\beta$ is linear, we see that $\theta^0$ lies under $\gamma^0$
and hence it also lies under $\varphi$.

Let $T$ be the inertial group of $\theta^0$ in $G$, so that $Q\le T$.
Write $\tau\in\I_\pi(T)$
for the Clifford correspondent of $\varphi$ over $\theta^0$.
We want to show that $Q$ is a vertex for $\tau$.
Observe that $T\cap W$ is the inertial group of $\theta^0$ in $W$,
and if we denote by $\eta$ the Clifford correspondent of $\gamma^0$ over $\theta^0$,
then $\gamma^0=\eta^W$.
Since $\gamma^0(1)$ is a $\pi$-number, we deduce that both $|W:T\cap W|$ and $\eta(1)$
are $\pi$-numbers, and thus $Q$ is a Hall $\pi'$-subgroup of $T\cap W$.
It is clear that $\eta^T$ lies over $\theta^0$ and under $\varphi$,
which forces $\eta^T=\tau$.
By definition, we conclude that $Q$ is a vertex for $\tau$, as desired.
\end{proof}

\section{Proofs}
We begin with a useful result.

\begin{lem}\label{linear}
Let $G$ be a $\pi$-separable group, and suppose that
$\chi\in \Irr(G|Q,\delta)$ is a lift of $\varphi\in\I_{\pi}(G)$.
Then $Q$ is a vertex for $\varphi$ if and only if $\delta$ is a linear character of $Q$.
\end{lem}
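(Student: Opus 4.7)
The plan is to work with a normal nucleus $(W,\gamma)$ affording the Navarro vertex $(Q,\delta)$. By construction $\chi=\gamma^G$, $\gamma$ is $\pi$-factored as $\gamma=\gamma_\pi\gamma_{\pi'}$, $Q\in\Hall_{\pi'}(W)$, and $\delta=(\gamma_{\pi'})_Q$. Two small observations drive the proof: first, $\delta(1)=\gamma_{\pi'}(1)$, so $\delta$ is linear iff $\gamma_{\pi'}$ is linear; second, since induction commutes with restriction to $\pi$-elements, $\varphi=\chi^0=(\gamma^0)^G$ with $\gamma^0=\gamma_\pi^0\,\gamma_{\pi'}^0$.

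For the direction ``$\delta$ linear $\Rightarrow$ $Q$ is a vertex of $\varphi$'', I would use that a linear $\pi'$-special character has $\pi'$-order, so its value on any $\pi$-element is a root of unity whose order is simultaneously a $\pi$- and a $\pi'$-number, forcing it to equal $1$. Hence $\gamma_{\pi'}^0$ is the trivial $\pi$-partial character of $W$, and $\gamma^0=\gamma_\pi^0\in\I_\pi(W)$ is irreducible with $\gamma_\pi^0(1)=\gamma_\pi(1)$ a $\pi$-number. Then $\varphi=(\gamma_\pi^0)^G$ is induced from an irreducible $\pi$-partial character of $\pi$-degree on a subgroup whose Hall $\pi'$-subgroup is $Q$, which is exactly the data needed to conclude that $Q$ is a vertex for $\varphi$.

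For the converse I would compare $\pi'$-parts of degrees. From $\chi=\gamma^G$ one reads off
$$\varphi(1)_{\pi'}=[G:W]_{\pi'}\,\gamma_{\pi'}(1)=\frac{|G|_{\pi'}}{|Q|}\,\gamma_{\pi'}(1),$$
since $\gamma_\pi(1)$ is a $\pi$-number and $Q\in\Hall_{\pi'}(W)$. On the other hand, the standard degree formula for a vertex of a $\pi$-partial character yields $|Q|\,\varphi(1)_{\pi'}=|G|_{\pi'}$, hence $\varphi(1)_{\pi'}=|G|_{\pi'}/|Q|$. Comparing the two forces $\gamma_{\pi'}(1)=1$, so $\gamma_{\pi'}$ and therefore $\delta$ is linear.

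There is no real obstacle here. The argument rests on two standard tools: the behavior of a $\pi$-factored character under restriction to $\pi$-elements (in particular that a linear $\pi'$-special character restricts to the trivial $\pi$-partial character), and the formula $|Q|\,\varphi(1)_{\pi'}=|G|_{\pi'}$ for the order of the vertex of $\varphi\in\I_\pi(G)$. The only subtle point is that the trivialization $\gamma_{\pi'}^0=(1_W)^0$ genuinely uses both the linearity and the $\pi'$-specialness of $\gamma_{\pi'}$; once this is in hand, the degree comparison closes out both directions in a line.
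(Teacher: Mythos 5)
Your proof is correct and takes essentially the same approach as the paper: both directions hinge on comparing $\varphi(1)_{\pi'}=|G|_{\pi'}\delta(1)/|Q|$ (read off from $\chi=\gamma^G$) with the vertex degree formula $|Q|\,\varphi(1)_{\pi'}=|G|_{\pi'}$ (Corollary 5.18 of Isaacs), together with noting that $\gamma^0$ has $\pi$-degree when $\delta$ is linear. Your extra remark that $\gamma_{\pi'}^0$ trivializes because a linear $\pi'$-special character has $\pi'$-order is a correct and clean way to see that $\gamma^0=\gamma_\pi^0\in\I_\pi(W)$, a point the paper leaves implicit.
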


\begin{proof}
By definition, there exist a subgroup $W$ of $G$ and some $\pi$-factored character $\gamma\in\Irr(W)$ such that $\gamma^G=\chi$,
$Q\in\Hall_{\pi'}(W)$ and $\delta=(\gamma_{\pi'})_Q$.
Then $\varphi=\chi^0=(\gamma^G)^0=(\gamma^0)^G$, and thus
$$\varphi(1)_{\pi'}=\chi(1)_{\pi'}=|G|_{\pi'}\delta(1)/|Q|.$$
If $Q$ is a vertex for $\varphi$, we deduce by Corollary 5.18 of \cite{I2018}
that $\varphi(1)_{\pi'}=|G|_{\pi'}/|Q|$,
which forces $\delta(1)=1$.
Conversely, if $\delta$ is linear, then $\gamma^0\in\I_\pi(W)$
has $\pi$-degree, and by definition, we see that $Q$ is a vertex for $\varphi$.
\end{proof}

The following is essentially Lemma 3.3 of \cite{CL2010},
and we can remove the condition that $2\in\pi$ by Lemma \ref{linear}.
Recall from the introduction that we use $L_{\varphi}(Q,\delta)$
to denote the set of irreducible characters of $G$ having Navarro vertex $(Q,\delta)$ and lifting
the $\pi$-partial character $\varphi\in\I_\pi(G)$.

\begin{lem}\label{max-N}
Let $G$ be $\pi$-separable, and
let $\varphi\in\I_{\pi}(G)$ have vertex $Q$.
Suppose that $\chi,\psi\in L_{\varphi}(Q,\delta)$.
If $N$ is a normal subgroup of $G$, then the irreducible constituents of $\chi_{N}$ are $\pi$-factored if and only if the
irreducible constituents of $\psi_{N}$ are $\pi$-factored.
\end{lem}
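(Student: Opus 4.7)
The plan is to leverage Lemma~\ref{linear} in order to reduce to exactly the situation handled by Lemma~3.3 of \cite{CL2010}. Since $\chi \in L_\varphi(Q,\delta)$ and $Q$ is a vertex for $\varphi$, Lemma~\ref{linear} forces $\delta$ to be linear; this is the only place where the hypothesis $2 \in \pi$ enters the Cossey--Lewis argument, so removing that hypothesis is painless here. Working by symmetry, I would assume that every irreducible constituent of $\chi_N$ is $\pi$-factored and aim to conclude the same for $\psi_N$.

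First I would use Lemma~\ref{N5.3}(1) to extract a good constituent $\theta$ of $\chi_N$ (with respect to the pair $(Q,\delta)$). Then Lemma~\ref{N5.3}(3), whose statement requires $\delta$ linear, supplies the factorization $\theta = \alpha\beta$, with $\alpha = \theta_\pi$ being $Q$-invariant $\pi$-special and $\beta = \theta_{\pi'}$ being the unique $\pi'$-special character of $N$ lying over $\delta_{Q\cap N}$. Invoking Lemma~\ref{good}, the $\pi$-partial character $\alpha^0 \in \I_\pi(N)$ then has $\pi$-degree, lies under $\varphi = \chi^0 = \psi^0$, and its Clifford correspondent in $T = G_{\alpha^0}$ has vertex $Q$. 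All of this data is attached to $(\varphi, Q, \delta)$ rather than to $\chi$ specifically.

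The final step is to transfer this data to $\psi$. Since $\alpha^0$ is a constituent of $\varphi_N = (\psi_N)^0$, after replacing $\alpha^0$ (and the pair $(Q,\delta)$) by a suitable $G$-conjugate I may assume that some irreducible constituent $\eta$ of $\psi_N$ has $\alpha^0$ appearing in $\eta^0$. The plan is then to identify $\eta$ with $\alpha\beta$, after which Clifford's theorem implies that every irreducible constituent of $\psi_N$ is a $G$-conjugate of the $\pi$-factored character $\alpha\beta$ and hence is itself $\pi$-factored. The main obstacle will be carrying out this identification: $\alpha$ is uniquely determined by $\alpha^0$ as its $\pi$-special Fong--Swan lift, and $\beta$ is uniquely determined by $\delta$ via Lemma~\ref{N5.3}(3), so the product $\alpha\beta$ is a natural candidate; but confirming that this candidate actually appears in $\psi_N$ requires running the symmetric good-constituent analysis for $\psi$, which is the technical core of Lemma~3.3 of \cite{CL2010} and is precisely the step where the reworking via our Section~2 pays off.
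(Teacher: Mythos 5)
Your opening moves match the paper: invoke Lemma~\ref{linear} to force $\delta$ to be linear, then argue by symmetry, assuming $\chi_N$ has $\pi$-factored constituents and trying to deduce the same for $\psi_N$. But the rest of your plan runs into a genuine circularity that you gesture at but do not resolve. You want to extract from $\psi_N$ an irreducible constituent $\eta$ and identify it with the $\pi$-factored character $\alpha\beta$ coming from the good constituent of $\chi_N$. The problem is that Lemma~\ref{N5.3} (the good-constituent machinery) can only be applied to $\psi$ at the level of $N$ \emph{after} you already know every irreducible constituent of $\psi_N$ is $\pi$-factored --- that is precisely its hypothesis, and precisely the conclusion you are trying to reach. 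Nor can you sidestep this by looking at $\eta^0$: if $\eta$ is not $\pi$-factored, then $\eta^0$ need not be irreducible, and knowing that $\alpha^0$ appears in $\eta^0$ gives you no handle on $\eta$ itself. So the step you flag as ``the technical core'' is not merely hard to carry out; the framework you set up cannot reach it.

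The paper avoids this circularity by a maximality/chief-factor argument that you should compare against. It picks a normal $L\le N$ maximal such that $\psi_L$ has $\pi$-factored constituents; if $L<N$ it takes a chief factor $M/L$ inside $N$. At the level of $L$, \emph{both} $\chi_L$ and $\psi_L$ have $\pi$-factored constituents, so Lemma~\ref{N5.3}(3) applies to both, producing $\alpha_1\beta$ under $\chi_L$ and $\alpha_2\beta$ under $\psi_L$ with the \emph{same} linear $\pi'$-special factor $\beta$ (pinned down by $\delta$). The key transfer step is then: because $(\alpha_1\beta)^0=\alpha_1^0$ and $(\alpha_2\beta)^0=\alpha_2^0$ both lie under $\varphi_L$, Clifford's theorem for $\pi$-partial characters together with the injectivity of the $\pi$-special lift shows $\alpha_1$ and $\alpha_2$ are $G$-conjugate. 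Since $M\NM G$, the $M$-invariance criterion (either $\alpha_1$ or $\beta$ is $M$-invariant, forced by $\chi_M$ being $\pi$-factored over $\alpha_1\beta$) passes from $\alpha_1$ to $\alpha_2$, and Lemma~4.2 of Isaacs then makes every member of $\Irr(M\mid\alpha_2\beta)$ $\pi$-factored, contradicting the maximality of $L$. In short: the property is lifted up chief factor by chief factor starting from where it is known, rather than attacked head-on at $N$. Your proposal needs this descent-and-lift mechanism (or something equivalent) to close the gap.
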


\begin{proof}
By Lemma \ref{linear}, we see that $\delta$ is linear,
and by symmetry, we may assume that $\chi_N$ has $\pi$-factored irreducible constituents
and we want to show that $\psi_N$ also has $\pi$-factored irreducible constituents.

To do this, we can choose a normal subgroup $L$ of $G$ contained in $N$
maximal with the property that the irreducible constituents of $\psi_{L}$ are
$\pi$-factored. If $L=N$, we are done, and so we can assume that $L<N$.
Since the irreducible constituents of $\chi_{L}$ are also $\pi$-factored,
it follows from Lemma \ref{N5.3} that there exist
$\pi$-special characters $\alpha_1,\alpha_2\in\Irr(L)$
and a $\pi'$-special character $\beta\in\Irr(L)$ with $\beta_{Q\cap L}=\delta_{Q\cap L}$,
and such that $\alpha_{1}\beta$ and $\alpha_{2}\beta$ are irreducible constituents of $\chi_{L}$ and $\psi_{L}$, respectively.
Note that $\beta$ is linear, so $(\alpha_i\beta)^0=(\alpha_i)^0$,
which is an irreducible constituent of $(\chi^0)_N=\varphi_N$ for $i=1,2$.
By Clifford's theorem for $\pi$-partial characters (Corollary 5.7 of \cite{I2018}),
we see that $(\alpha_1)^0$ and $(\alpha_2)^0$ are conjugate in $G$,
and thus $\alpha_1$ and $\alpha_2$ are also $G$-conjugate
because the map $\alpha\mapsto\alpha^0$ defines an injection from
the set of $\pi$-special characters of $L$ into $\I_{\pi}(L)$ (see Theorem 3.14 of \cite{I2018}).

Now let $M/L$ be a chief factor of $G$ with $M\leq N$,
so that $M/L$ is either a $\pi$-group or a $\pi'$-group.
Since we are assuming that $\chi_{N}$ has $\pi$-factored irreducible constituents,
each irreducible constituent of $\chi_{M}$ is also $\pi$-factored,
and in particular, some member of $\Irr(M|\alpha_{1}\beta)$ must be
$\pi$-factored. It follows by Clifford's theorem that either $\alpha_1$ or $\beta$ extends to $M$
according as $M/L$ is a $\pi'$-group or a $\pi$-group.
In particular, either $\alpha_1$ or $\beta$ is $M$-invariant.
Note that $\alpha_1$ is $M$-invariant if and only if $\alpha_2$ is $M$-invariant
because both are conjugate in $G$ and $M\NM G$,
and thus either $\alpha_2$ or $\beta$ is $M$-invariant.
By Lemma 4.2 of \cite{I2018}, we conclude that every member of
$\Irr(M|\alpha_2\beta)$ is $\pi$-factored. But $\Irr(M|\alpha_2\beta)$ contains
some irreducible constituent of $\psi_M$, and so the irreducible constituents of $\psi_M$ are
also $\pi$-factored. This contradicts the maximality of $L$, thus proving the lemma.
\end{proof}

We can now prove Theorem B in the introduction.

\medskip\noindent
\emph{Proof of Theorem B.}
We proceed by induction on $|G|$.
If $L_{\varphi}(Q,\delta)$ is empty, there is nothing further to prove,
so we can assume that this set is nonempty.
Applying Lemma \ref{linear}, we deduce that $\delta$ is linear.
By Lemma \ref{max-N}, we can fix $N\NM G$
such that for each $\chi\in L_{\varphi}(Q,\delta)$,
$N$ is the unique maximal normal subgroup of $G$ with the property that every irreducible constituent of $\chi_{N}$ is $\pi$-factored.

Write $\Omega$ for the set of those irreducible constituents $\eta$ of $\varphi_{N}$
such that the Clifford correspondent $\varphi_\eta$ of $\varphi$ over $\eta$ has vertex $Q$.
By Lemma 6.33 of \cite{I2018} (or Lemma 4.2 of \cite{N2002}),
we see that $\Omega$ cannot be empty and forms a single ${\bf N}_G(Q)$-orbit.
It is clear that each member of $\Omega$ is $Q$-invariant.
Let $\Lambda$ be a set of representatives for the ${\bf N}_{G}(Q,\delta)$-orbits in $\Omega$.
Observe that each of irreducible constituents of $\varphi_N$ has $\pi$-degree by Lemma \ref{good},
so it has a unique $\pi$-special lift (see Theorem 3.14 of \cite{I2018}).
In particular, if $\eta\in\Lambda$, then we will always use $\hat\eta$ to denote the $\pi$-special lift of $\eta$.
Also, we let $\hat\delta=\beta$ be as in Lemma \ref{N5.3}(3),
which is ${\bf N}_{G}(Q,\delta)$-invariant and is independent of the choice of $\chi\in L_{\varphi}(Q,\delta)$.
Note that since $\hat\delta$ is $\pi'$-special, it follows
from Theorem 2.2 of \cite{I2018} that $\hat\eta\hat\delta\in\Irr(N)$ for every $\eta\in\Lambda$.

Following Navarro \cite{N2002}, we denote by $\Irr(G|Q,\delta,\theta)$ the set of those characters $\chi\in\Irr(G|Q,\delta)$
such that $\theta\in\Irr(N)$ is good for $\chi$, and for notational convenience, we write
$$L_{\varphi}(Q,\delta,\theta)=\Irr(G|Q,\delta,\theta)\cap L_\varphi.$$
To complete the proof, we will carry out the following steps.

\smallskip\noindent
\emph{Step 1. We have $|L_{\varphi}(Q,\delta)|=\sum_{\eta\in\Lambda}
|L_{\varphi}(Q,\delta,\hat{\eta}{\hat\delta})|$.}

It suffices to show that $L_{\varphi}(Q,\delta)=\bigcup_{\eta\in\Lambda}L_{\varphi}(Q,\delta,\hat{\eta}{\hat\delta})$
is a disjoint union. For each $\eta\in\Lambda$, it is clear that $L_{\varphi}(Q,\delta,\hat{\eta}{\hat\delta})\subseteq L_{\varphi}(Q,\delta)$.
If $\chi\in L_{\varphi}(Q,\delta)$, then by Lemma \ref{N5.3},
there exists a good constituent $\theta$ of $\chi_N$ such that $\theta=\alpha\hat\delta$,
where $\alpha\in\Irr(N)$ is the $\pi$-special factor of $\theta$.
Furthermore, $\hat\delta$ is the linear $\pi'$-special factor of $\theta$,
which implies that $\theta^0=\alpha^0$ is irreducible.
By Lemma \ref{good}, we have $\alpha^0=\theta^0\in\Omega$,
and thus there exist $\eta\in\Lambda$ and $x\in {\bf N}_G(Q,\delta)$ such that
$(\alpha^x)^0=(\alpha^0)^x=\eta$.
It follows that $\alpha^x$ is a $\pi$-special lift of $\eta$, and thus $\alpha^x=\hat\eta$ (see Theorem 3.14 of \cite{I2018}).
We have seen that $\hat\delta$ is ${\bf N}_{G}(Q,\delta)$-invariant,
so $\theta^x=\alpha^x{\hat\delta}^x=\hat\eta{\hat\delta}$.
This shows that $\hat\eta{\hat\delta}$ is also a good constituent of $\chi_{N}$
by Lemma \ref{N5.3} again,
so $\chi\in L_{\varphi}(Q,\delta,\hat{\eta}{\hat\delta})$.
We conclude that $L_{\varphi}(Q,\delta)=\bigcup_{\eta\in\Lambda}L_{\varphi}(Q,\delta,\hat{\eta}{\hat\delta})$.

Furthermore, if $\chi\in L_{\varphi}(Q,\delta,\hat{\eta}{\hat\delta})\cap L_{\varphi}(Q,\delta,\hat{\tau}{\hat\delta})$
for some $\eta,\tau\in\Lambda$,
then both $\hat{\eta}{\hat\delta}$ and $\hat{\tau}{\hat\delta}$ are good for $\chi$.
By Lemma \ref{N5.3}(2), we have $(\hat{\tau}{\hat\delta})^y=\hat{\eta}{\hat\delta}$ for some $y\in{\bf N}_{G}(Q,\delta)$.
This implies that $\hat{\tau}^y=\hat{\eta}$
(Theorem 2.2 of \cite{I2018}), and hence $\tau^y=(\hat\tau^0)^y=\hat\eta^0=\eta$.
By definition, we obtain $\tau=\eta$.
This establishes that the sets $L_{\varphi}(Q,\delta,\hat\eta{\hat\delta})$ are disjoint pairwise,
as desired.

\medskip\noindent
\emph{Step 2. We have $|{\bf N}_{G}(Q):{\bf N}_{G}(Q,\delta)|
=\sum_{\eta\in\Lambda}|{\bf N}_{G_{\eta}}(Q):{\bf N}_{G_{\eta}}(Q,\delta)|$.}

Fix a $\pi$-partial character $\nu\in\Omega$.
We have seen that $\Omega$ is a single ${\bf N}_{G}(Q)$-orbit, and thus
$$|{\bf N}_{G}(Q):{\bf N}_{G_\nu}(Q)|=|\Omega|
=\sum_{\eta\in\Lambda}|{\bf N}_G(Q,\delta):{\bf N}_G(Q,\delta)\cap G_{\eta}|.$$
Observe that each $\eta\in\Lambda$ is ${\bf N}_{G}(Q)$-conjugate to $\nu$,
so $|{\bf N}_{G_{\nu}}(Q)|=|{\bf N}_{G_{\eta}}(Q)|$, and we obtain
\[\begin{aligned}
~|{\bf N}_{G}(Q):{\bf N}_{G}(Q,\delta)|
=&\sum\limits_{\eta\in\Lambda}|{\bf N}_{G_{\nu}}(Q):{\bf N}_{G}(Q,\delta)\cap G_{\eta}|\\
=&\sum\limits_{\eta\in\Lambda}|{\bf N}_{G_{\eta}}(Q):{\bf N}_{G}(Q,\delta)\cap
G_{\eta}|\\=&\sum\limits_{\eta\in\Lambda}|{\bf N}_{G_{\eta}}(Q):{\bf N}_{G_{\eta}}(Q,\delta)|.
\end{aligned}\]

\smallskip\noindent
\emph{Step 3. For each $\eta\in\Lambda$, we have $|L_{\varphi}(Q,\delta,\hat{\eta}{\hat\delta})|\leq|{\bf N}_{G_{\eta}}(Q):{\bf N}_{G_{\eta}}(Q,\delta)|$.}

Fix $\eta\in\Lambda$, and let $T=G_{\hat{\eta}{\hat\delta}}$, the inertial group of $\hat\eta\hat\delta\in\Irr(N)$
in $G$. We have seen that both ${\hat\delta}$ and $\eta$ are $Q$-invariant,
and since $\eta$ and $\hat\eta$ are uniquely determined each other (Theorem 3.14 of \cite{I2018}),
it follows that $\hat\eta$ is also $Q$-invariant. Thus $Q\leq G_{{\hat\delta}}\cap G_{\hat\eta}=G_{\hat{\eta}{\hat\delta}}=T$.

We can assume that $L_{\varphi}(Q,\delta,\hat{\eta}{\hat\delta})$ is not empty.
For each $\chi\in\Irr(G|Q,\delta,\hat{\eta}{\hat\delta})$,
let $\psi\in\Irr(T)$ be its Clifford correspondent over $\hat\eta{\hat\delta}$.
By the choice of $N$ and the construction of normal nuclei, it is easy to see that
$\chi$ and $\psi$ share a common normal nucleus
that affords the Navarro vertex $(Q,\delta)$.
This implies that $\psi\in\Irr(T|Q,\delta,\hat{\eta}{\hat\delta})$, and the Clifford correspondence guarantees that
the map $\psi\mapsto\psi^G=\chi$ defines a bijection
$$f:\Irr(T|Q,\delta,\hat{\eta}{\hat\delta})\rightarrow\Irr(G|Q,\delta,\hat{\eta}{\hat\delta}).$$
Furthermore, if $\chi\in L_{\varphi}(Q,\delta,\hat{\eta}{\hat\delta})$,
then the corresponding character $\psi\in\Irr(T|Q,\delta,\hat{\eta}{\hat\delta})$
has the properties that $\psi^0\in\I_{\pi}(T)$ has vertex $Q$
(see Lemma \ref{linear}), lies over $\eta$ and induces $\varphi$.
For notational convenience, we write $\Delta$
for the set of those members of $\I_\pi(T)$ that have vertex $Q$, lie over $\eta$ and induce $\varphi$.
Then $\chi\mapsto\psi^0$ defines a map from $L_{\varphi}(Q,\delta,\hat{\eta}{\hat\delta})$ into $\Delta$.
On the other hand, for each $\mu\in \Delta$,
it is clear that each $\psi'\in L_{\mu}(Q,\delta,\hat{\eta}{\hat\delta})$, if exists,
satisfies
$$f(\psi')=(\psi')^G\in \Irr(G|Q,\delta,\hat{\eta}{\hat\delta}),$$
and since $(\psi')^0=\mu$ and $\mu^G=\varphi$, it follows that $(\psi')^G$ is a lift of $\varphi$
and hence $f(\psi')\in L_{\varphi}(Q,\delta,\hat{\eta}{\hat\delta})$.
By the bijectivity of the map $f$, we conclude that
$$|L_{\varphi}(Q,\delta,\hat{\eta}{\hat\delta})|
= \sum_{\mu\in \Delta}|L_{\mu}(Q,\delta,\hat{\eta}{\hat\delta})|.$$

We may assume that $T<G$. Otherwise, $\hat\eta\hat\delta$ is $G$-invariant,
and let $\chi\in L_\varphi(Q,\delta,\hat\eta\hat\delta)$.
Then $\chi$ lies over $\hat\eta\hat\delta$.
We have seen that $N$ is the unique normal subgroup of $G$ maximal with the property
that $\chi_N$ has $\pi$-factored irreducible constituents,
and thus by the construction of normal nuclei, we have $N=G$.
In this case, each $\chi\in L_{\varphi}(Q,\delta)$ is $\pi$-factored,
and by the definition of Navarro vertices, we have $Q\in\Hall_{\pi'}(G)$ and $\delta=(\chi_{\pi'})_{Q}$.
Since $\delta$ is linear, we see that $\chi_{\pi'}(1)=1$ and thus $\varphi=\chi^0=(\chi_\pi)^0$.
It follows from Theorem 3.14 of \cite{I2018} that $\chi_\pi$ is uniquely determined by $\varphi$,
and $\chi_{\pi'}$ is uniquely determined by $\delta$ (see Corollary 3.15 of \cite{I2018}).
Thus $L_{\varphi}(Q,\delta)=\{\chi\}$, and the result follows in this case.
So we may assume that $T<G$.

Now, for each $\mu\in \Delta\subseteq \I_\pi(T)$,
since $|T|<|G|$, it follows by the inductive hypothesis applied in the group $T$ that
$|L_{\mu}(Q,\delta)|\leq|{\bf N}_{T}(Q):{\bf N}_{T}(Q,\delta)|$.
In particular, we have
$$|L_{\mu}(Q,\delta,\hat{\eta}{\hat\delta})|\leq |L_{\mu}(Q,\delta)|
\leq|{\bf N}_{T}(Q):{\bf N}_{T}(Q,\delta)|.$$

Next, we consider the Clifford correspondent $\varphi_\eta\in\I_\pi(G_\eta)$ of $\varphi$ over $\eta$.
Since $T\le G_{\hat\eta}\le G_\eta$, we have $\Delta=I_{\varphi_{\eta}}(T|Q)$
by the Clifford correspondence for $\pi$-partial characters (Lemma 5.11 of \cite{I2018}).
Furthermore, by the assumption of the theorem, we obtain
$|I_{\varphi_{\eta}}(T|Q)|\leq|{\bf N}_{G_{\eta}}(Q) : {\bf N}_{T}(Q)|$
and thus $|\Delta|\leq|{\bf N}_{G_{\eta}}(Q) : {\bf N}_{T}(Q)|$.
It follows that
$$\begin{aligned}
~|L_{\varphi}(Q,\delta,\hat{\eta}{\hat\delta})|
= &\sum\limits_{\mu\in \Delta}|L_{\mu}(Q,\delta,\hat{\eta}{\hat\delta})|\\
\leq&\sum\limits_{\mu\in \Delta}|{\bf N}_{T}(Q):{\bf N}_{T}(Q,\delta)|\\
=&|\Delta|\cdot|{\bf N}_{T}(Q):{\bf N}_{T}(Q,\delta)|\\
\leq&|{\bf N}_{G_{\eta}}(Q) :
{\bf N}_{T}(Q)|\cdot|{\bf N}_{T}(Q):{\bf N}_{T}(Q,\delta)|\\
=&|{\bf N}_{G_{\eta}}(Q):{\bf N}_{T}(Q,\delta)|.\end{aligned}$$
Since $\hat\delta$ is ${\bf N}_G(Q,\delta)$-invariant, we have ${\bf N}_G(Q,\delta)\le G_{{\hat\delta}}$,
and thus
$${\bf N}_{T}(Q,\delta)=G_{\hat\eta}\cap G_{\hat\delta} \cap {\bf N}_G(Q,\delta)
=G_\eta\cap {\bf N}_G(Q,\delta)={\bf N}_{G_\eta}(Q,\delta).$$
This proves that $|L_{\varphi}(Q,\delta,\hat{\eta}{\hat\delta})|\leq|{\bf N}_{G_{\eta}}(Q):{\bf N}_{G_{\eta}}(Q,\delta)|$,
as required.

Finally, combining the above three steps, we obtain
$$|L_{\varphi}(Q,\delta)|=\sum\limits_{\eta\in\Lambda}|L_{\varphi}(Q,\delta,\hat{\eta}{\hat\delta})|
\leq\sum\limits_{\eta\in\Lambda}|{\bf N}_{G_{\eta}}(Q):{\bf N}_{G_{\eta}}(Q,\delta)|
=|{\bf N}_{G}(Q):{\bf N}_{G}(Q,\delta)|,$$
and the proof is now complete.
\qed

\medskip
As an immediate application of Theorem B, we prove Theorem A.
Recall that as in \cite{JW2023},
we use $\tilde L_\varphi$ to denote the set of lifts of a given $\varphi\in\I_\pi(G)$ that have a linear Navarro vertex.

\medskip\noindent
\emph{Proof of Theorem A.}
By Corollary 5.4 of \cite{JW2023}, we have $|I_\sigma(R|Q)|\le |{\bf N}_S(Q):{\bf N}_R(Q)|$
whenever $Q\le R\le S\le G$ and $\sigma\in\I_\pi(S)$ with vertex $Q$,
and thus the first statement follows by Theorem B.

To show that $|{\tilde L}_\varphi|\le |Q:Q'|$,
we consider the action of ${\bf N}_G(Q)$ on the set $\Lin(Q)$ of linear characters of $Q$,
and let $\{\delta_1,\ldots,\delta_n\}$ be a set of representatives of the ${\bf N}_G(Q)$-orbits in $\Lin(Q)$.
Then we have
$$|Q:Q'|=|\Lin(Q)|=\sum_{i=1}^n|{\bf N}_G(Q):{\bf N}_G(Q,\delta_i)|.$$
On the other hand, we see that each $\chi\in\tilde L_\varphi$ has a linear Navarro vertex,
and thus by Lemma \ref{linear}, there exists $\delta\in\Lin(Q)$ such that $(Q,\delta)$
is a Navarro vertex for $\chi$.
Then $\delta$ is ${\bf N}_G(Q)$-conjugate to some $\delta_i$,
so that $(Q,\delta_i)$ is also a Navarro vertex for $\chi$.
This proves that $\tilde L_\varphi=\bigcup_{i=1}^n L_\varphi(Q,\delta_i)$,
which is clearly a disjoint union. The result follows by Theorem B again.
\qed

\medskip
Next, we establish Theorem C in the introduction.

\medskip\noindent
\emph{Proof of Theorem C.}
It suffices to verify the assumption of Theorem B.
For notational simplicity, we replace $S,R$ and $\sigma$ with $G,V$ and $\varphi$,
and we prove that
$$|I_\varphi(V|Q)|\le |{\bf N}_G(Q):{\bf N}_V(Q)|.$$

If $2\not\in\pi$, then the result follows by Corollary 5.4 of \cite{JW2023} or Theorem A.
So we can assume that $\pi=\{2\}$, and in this case, we see that $G$ is solvable.
Suppose that the result is false, and let $G$ be a counterexample such that $|G| +|G :V|$ is as small as possible.
Write $N$ for the core of $V$ in $G$, and let $K/N$ be a chief factor of $G$.
Applying Theorem 2.1 of \cite{L2010}, we obtain the following properties.

(1) $V$ is a maximal subgroup of $G$ with index a power of $2$,

(2) $G=KV$ and $K\cap V=N<V$,

(3) $\varphi_N=e\alpha$, where $\alpha\in\I_\pi(N)$ and $e\ge 1$,

(4) $\alpha(1)$ is a $\pi$-number,

(5) $\alpha^K$ is a multiple of some $\beta\in\I_\pi(K)$.\\
In this situation, we see that $|K:N|=|G:V|$ is a power of $2$,
and since both $\alpha(1)$ and $|K:N|$ are $\pi$-numbers,
it follows that $\beta(1)$ is also a $\pi$-number by Clifford's theorem for $\pi$-partial characters (Corollary 5.7 of \cite{I2018}).

Now, let $M/K$ be a chief factor of $G$, and write $U=M\cap V$.
Then by a standard fact about maximal subgroups of solvable groups
(see, for example, Lemma 7.8 of \cite{I2018} or Lemma 2.1 of \cite{IW2010}),
we know that $M/K$ is a $p$-group for some prime $p\neq 2$ and ${\bf C}_{K/N}(U/N)=1$.
In particular, $M/K$ is a $\pi'$-group and $N$ is the core of $U$ in $M$.

Let $\tilde\alpha\in\B_\pi(N)$ and $\tilde\beta\in\B_\pi(K)$
be such that $\tilde\alpha^0=\alpha$ and $\tilde\beta^0=\beta$.
As we have seen, both $\alpha$ and $\beta$ have $\pi$-degree,
so both $\tilde\alpha$ and $\tilde\beta$ are $\pi$-special (Theorem 4.12 of \cite{I2018}).
Furthermore, note that each irreducible constituent of $\tilde\alpha^K$ is $\pi$-special (Theorem 2.4(a) of \cite{I2018}),
and that $(\tilde\alpha^K)^0=(\tilde\alpha^0)^K=\alpha^K$, which is a multiple of $\beta$ by (5).
We conclude that each irreducible constituent of $\tilde\alpha^K$ is a $\pi$-special lift of $\beta$,
and hence $\tilde\alpha^K$ is necessarily a multiple of $\tilde\beta$.
Now $\tilde\alpha$ is $G$-invariant because $\alpha$ is,
and we deduce that $\tilde\beta_N=e\tilde\alpha$ and thus $\tilde\alpha^K=e\tilde\beta$.
This shows that $\tilde\alpha$ is fully ramified with respect to $K/N$,
and in particular, we have $e^2=|K:N|$.

Also, since $\tilde\alpha$ is $U$-invariant and $U/N$ is a $\pi'$-group,
there exists a unique $\pi$-special character $\rho\in\Irr(U)$ lying over $\tilde\alpha$,
and in fact, $\rho$ is an extension of $\tilde\alpha$. (See Lemma 2.4(b) of \cite{I2018}.)
Similarly, there exists a unique $\pi$-special character $\xi\in\Irr(M)$ lying over $\tilde\beta$,
and thus $\xi_K=\tilde\beta$.

We fix a $\pi$-partial character $\eta\in I_\varphi(V|Q)$ so that $\eta^G=\varphi$,
and let $\psi$ be the unique lift of $\eta$ in $\B_\pi(V)$.
Since $\alpha$ (and hence $\tilde\alpha$) is $V$-invariant,
it follows that $\eta$ lies over $\alpha$, and so $\psi_N$ is a multiple of $\tilde\alpha$.
This implies that each irreducible constituent $\rho'$ of $\psi_U$ lies over $\tilde\alpha$.
But such a character $\rho'$ both lies in $\B_\pi(U)$ (Corollary 4.21 of \cite{I2018})
and is $\pi$-factored (Lemma 4.2 of \cite{I2018}).
We conclude that $\rho'$ must be $\pi$-special (Theorem 4.12),
which forces $\rho'=\rho$, and thus $\psi_U$ is a multiple of $\rho$.

Write $\chi=\psi^G$. Then $\chi^0=(\psi^0)^G=\eta^G=\varphi$.
Furthermore, since $2\notin\pi'$, it follows by Theorem 5.2 of \cite{I2018} that $\chi\in\B_\pi(G)$.
Reasoning as we did with $\psi$, we can deduce that $\chi_M$ is a multiple of $\xi$.

Now $(\psi_U)^M=(\psi^G)_M=\chi_M$, so $\rho^M$ is a multiple of $\xi$.
Furthermore, we have
$$\rho^M(1)=|M:U|\rho(1)=|K:N|\tilde\alpha(1)=\tilde\alpha^K(1)=e\tilde\beta(1)=e\xi(1),$$
which forces $\rho^M=e\xi$ with $e^2=|M:U|$.
Since $\xi(1)\ge e\rho(1)$ by Frobenius reciprocity, we obtain
$$|M:U|\rho(1)=\rho^M(1)=e\xi(1)\ge e^2\rho(1)=|M:U|\rho(1),$$
and thus $\xi(1)=e\rho(1)$. It follows that $\xi_U=e\rho$, and in particular $[\xi_U,\xi_U]=|M:U|$.
This shows that $\xi$ vanishes on $M-U$, and since $\xi$ is a class function and $N$ is the core of $U$ in $M$,
we see that $\xi$ vanishes on $M-N$. Thus $\rho$ vanishes on $U-N$,
which implies that $\rho$ is necessarily fully ramified with respect to $U/N$.
But we have seen that $\rho$ is an extension of $\tilde\alpha$,
and we obtain a contradiction. This completes the proof.
\qed

\medskip

Finally, we prove Corollary D in the introduction.

\medskip\noindent
\emph{Proof of Corollary D.}
Note that $G$ is solvable by Burnside's theorem,
and that $\IBr_p(G)$ coincides with $\I_\pi(G)$, where $\pi=p'=\{q\}$.
Now the result follows by Theorem \ref{C}.
\qed

\section*{Acknowledgements}
This work was supported by the NSF of China (12171289) and the NSF of Shanxi Province
(20210302124077).


\end{document}